\theoremstyle{plain}
\newtheorem{theor}{Theorem}[section]
\newtheorem{claim}[theor]{Claim}
\newtheorem{definition}[theor]{Definition}
\newtheorem{lemma}[theor]{Lemma}
\newtheorem{rem}[theor]{Remark}
\theoremstyle{remark}
\def\theorparam{\beta_n}
\def\M{{M_n}}
\def\F{{F_{ij}}}
\def\Fp{{F^{\perp}_{ij}}}
\def\Aset{{\mathcal{M}_{n,d}}}
\def\An{A_n}
\def\A{A}
\def\a{{a}}
\def\R{{\mathbb R}}
\def\Prob{{\mathbb P}}
\def\col{{\rm Col}}
\def\row{R}
\def\supp{{\rm supp }}
\def\spn{{\rm span}\,}
\def\Om0{\Omega_0}
\newcommand{\rk}{{\rm rk\,}}
\newcommand{\lam}{\lambda}
\newcommand{\eps}{\varepsilon}
\def\r{\right}
\newcommand{\en}{\mathcal{E}_{n-1}}
\newcommand{\enn}{\mathcal{E}_{n-2}}
\newcommand{\e}{\mathcal{E}}
\def\O1{\Omega_1(\eps_1)}
\date{}
\title{The rank of random regular digraphs of constant degree}
\author{
Alexander E. Litvak
\and
Anna Lytova
\and
Konstantin Tikhomirov
\and
Nicole Tomczak-Jaegermann
\and
Pierre Youssef
}
\newcommand\address{\noindent\leavevmode

\medskip
\noindent
Alexander E. Litvak
and Nicole Tomczak-Jaegermann,\\
Dept.~of Math.~and Stat.~Sciences,\\
University of Alberta, \\
Edmonton, AB, Canada, T6G 2G1.\\
\texttt{\small
e-mails:  aelitvak@gmail.com \, \, and \, \,
nicole.tomczak@ualberta.ca}\\

\medskip

\noindent
Anna Lytova,\\
Faculty of Math., Physics, and Comp. Science,\\
University of Opole,\\
ul. Oleska 48, 45-052,\\
Opole, Poland.\\
\texttt{\small
e-mail:
alytova@math.uni.opole.pl
}\\

\medskip

\noindent
 Konstantin Tikhomirov,\\
Dept.~of Math.,
Princeton University,\\
Fine Hall, Washington road,\\
Princeton, NJ 08544.\\
\texttt{\small
e-mail:   kt12@math.princeton.edu}\\

\medskip

\noindent
Pierre Youssef,\\
Universit\'e Paris Diderot,\\
Laboratoire de Probabilit\'es et de mod\`eles al\'eatoires,\\
75013 Paris, France.\\
\texttt{\small
e-mail:  youssef@math.univ-paris-diderot.fr}
}
\begin{document}

\maketitle

\abstract{
Let $d$ be a (large) integer.
Given $n\geq 2d$, let $\An$ be the adjacency matrix
of a random directed $d$-regular graph on $n$ vertices, with
the uniform distribution. We show that the rank of $\An$ is at least $n-1$
with probability going to one as $n$ grows to infinity.
The proof combines the well known method of simple switchings and a
recent result of the authors on delocalization of eigenvectors of $\An$.
}

\bigskip

\noindent
{\small \bf AMS 2010 Classification:}
{\small
primary: 60B20, 15B52, 46B06, 05C80;\\
secondary: 46B09, 60C05
}

\noindent
{\small \bf Keywords: }
{\small
random regular graphs, random matrices,
rank, singularity probability.}

\section{Introduction}

Singularity of random discrete square matrices is a subject with a long
history and many results and applications. In particular, quantitative estimates on
the smallest singular number are important for understanding complexity of some algorithms.
Well invertible sparse matrices are of general interest in computer science, and it is known
that sparse matrices are computationally more efficient (require less operations for matrix-vector
multiplication). In this paper we deal with sparse random square matrices from a certain model.

In a standard setting, when the entries of the $n\times n$ matrix are
i.i.d. Bernoulli $\pm 1$ random variables,
the invertibility problem has been addressed by Koml\'os in \cite{Kom1,Kom2},
and later considered in several papers \cite{KKS,TV-sing,BVW}.
A long-standing conjecture asserts that the probability that the Bernoulli  matrix is singular
is $\big(1/2+o(1)\big)^{n}$. Currently, the best upper bound on this probability
 is $\big(1/\sqrt{2} + o(1)\big)^n$ obtained by  Bourgain, Vu, and Wood \cite{BVW}.
We would also like to mention related works on singularity of symmetric Bernoulli
matrices \cite{CTV,Ngu1,Ver} and Nguyen's work \cite{Ngu2}, where
random $0/1$ matrices with independent rows and row-sums constraints were considered.

A corresponding question can be formulated for adjacency matrices of random graphs.
For instance, consider the adjacency matrix of an undirected Erd\H{o}s--Renyi random graph
$G(n,p)$ which is a symmetric random $n\times n$ matrix whose off-diagonal
entries are i.i.d. $0/1$ random variables with the parameter $p$.
The case $p=1/2$ is closely related to the random model from the previous paragraph.
In \cite{CV} Costello and Vu proved that, given $c>1$, with large probability the rank
of the adjacency matrix of $G(n,p)$ is equal to the number of non-isolated vertices
whenever $c\ln n/n \leq p\leq 1/2$. It is known that $p=\ln n/n$ is the threshold of
connectivity, so that when $c>1$ and $c\ln n/n \leq p\leq 1/2$, the graph $G(n,p)$
typically contains no isolated vertices and is therefore of full rank with probability
going to one as $n$ tends to infinity (see \cite{BR} for quantitative bounds in the non-symmetric 
setting).
It was also shown that
if $p\to 0$ and $np\to \infty$, then $\big(\rk G(n,p)\big)/n \to 1$ as $n$ goes to infinity,
where $\rk (A)$ stands for the rank of the matrix $A$.
The case $p=y/n$ for a fixed $y$ was studied in \cite{BLS} where asymptotics for
$\big(\rk G(n,p)\big)/n$ were established.

In the absence of independence between the matrix entries, the problem of singularity
involves additional difficulties. Such a problem was considered for
the (symmetric) adjacency matrix $\M$ of a random (with respect to the uniform probability)
undirected $d$-regular graph on $n$ vertices, i.e., a graph in which each vertex has precisely $d$ neighbours.
The case $d=1$ corresponds to a permutation matrix which is non-singular,
and for $d=2$ the graph is a union of cycles and the matrix is almost surely singular.
Moreover, the invertibility of the adjacency matrix of the complementary graph is equivalent to
that of the original one
(in fact, the ranks of the adjacency matrices of a $d$-regular graph and of its complementary graph are the same).
This can be seen by first noticing that the eigenvalues of $J_n-\M$, where $J_n$ is the $n\times n$ matrix of ones,
are equal to the difference between those of $J_n$ and those of $\M$ (since the two commute)
and that all eigenvalues of $\M$ are bounded in absolute value by $d$,
which is smaller than the only non-zero eigenvalue of $J_n$ (equals to $n$).
In parallel to the Erd\H{o}s--Renyi model,  Costello and Vu  raised the following problem:
``For what $d$ is the adjacency  matrix $\M$ of full rank almost surely?" (see \cite[Section~10]{CV}).
They conjectured that for every $3\leq d\leq n-3$,
the adjacency matrix $\M$ is non-singular with probability going to $1$ as $n$ tends to $\infty$.
This conjecture was mentioned again in the survey
\cite[Problem~8.4]{Vu-survey} and 2014 ICM talks by Frieze \cite[Problem~7]{Frieze} and
by Vu \cite[Conjecture~5.8]{Vu-ICM}.

In the present paper, we are interested in behaviour of adjacency matrices of
random directed $d$-regular graphs with the uniform model, that is, random graphs
uniformly distributed on the set of all directed $d$-regular graphs on $n$ vertices.
By a directed $d$-regular graph  on $n$ vertices we mean a graph such that
each vertex has precisely $d$ in-neighbours and $d$ out-neighbours and where
loops and $2$-cycles are allowed but multiple edges are prohibited.
%With the uniform model of randomness,
The adjacency matrix $\An$ of such a graph
is uniformly distributed on the set of all (not necessarily symmetric)
$0/1$ matrices  with $d$ ones in every row and every column.
As in the symmetric case, in the case $d=1$ the matrix $A_1$ is a permutation matrix which is non-singular,
and in the case  $d=2$ the matrix $A_2$ is almost surely singular.
It is natural to ask the same question as in \cite{CV} for directed $d$-regular graphs
(see, in particular, \cite[Conjecture~1.5]{Cook}).
Cook \cite{Cook} proved that such a matrix is asymptotically almost surely non-singular
for $\omega(\ln^2 n)\leq d\leq n-\omega(\ln^2 n)$, where $f=f(n)= \omega(a_n)$
means $f/a_n \to \infty$ as $n\to \infty$. Further, in \cite{LLTTY1,LLTTY2},
the authors of the present paper showed that the singularity probability is bounded above
by $C\ln^3 d/\sqrt{d}$ for  $C\leq d\leq  n/\ln^2 n$, where $C$ is a (large) absolute positive
constant.
This settles the problem of singularity for  $d=d(n)$ growing to infinity with $n$ at any rate.
Moreover, quantitative bounds on the smallest singular value
for this model were derived in \cite{Cook2} and \cite{LLTTY3}.
Those estimates turn out to be essential in the study of
the limiting spectral distribution \cite{Cook2, LLTTY5}.

The challenging case when $d$ is a constant remains unresolved
and is  the main motivation for writing this note.
The lack of results in this setting
constitutes a major obstacle in establishing the conjectured non-symmetric (oriented) Kesten--McKay law as the limit
of the spectral distribution for the directed random $d$-regular graph (see, in particular, \cite[Section~7]{BC}).
This note illustrates a  partial progress in this direction.
Our main result is the following theorem.
Note that the probability bound in it is non-trivial only if
$\ln n> C\ln^2 d$, however in the complementary case we have  $\rk (\An)= n$
with high probability as was mentioned above.

\begin{theor}\label{th: main}
There exists a universal constant $C>0$ such that for any integer $d\geq C$ the following holds.
Let $n>d$ and let $\An$ be the adjacency matrix of the random directed $d$-regular graph on $n$ vertices,
with uniform distribution allowing loops but no multiple edges.
Then
$$
  \Prob\{\rk \An \geq n-1\}\geq 1-C\ln^2 d/\ln n.
$$
\end{theor}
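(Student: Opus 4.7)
My plan is to combine the simple-switching method with the authors' recent delocalization result quoted in the abstract as the key input. Suppose $\rk \An \leq n-2$; then both the left kernel and the right kernel of $\An$ have dimension at least two. The delocalization theorem asserts, with probability at least $1-C\ln^2 d/\ln n$, that every unit vector in either kernel is spread: for every $t\in\R$, the level set $\{i : v(i) = t\}$ has cardinality at most $M := C'n\ln^2 d/\ln n$, and the same bound holds for every non-trivial linear combination of two basis vectors of the kernel. In particular, the number of row pairs $(i_1,i_2)$ on which every left null vector takes a common value is at most $Mn$, and analogously for column pairs and right null vectors.

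A simple switching at rows $i_1\neq i_2$ and columns $j_1\neq j_2$ with $\An(i_1,j_1)=\An(i_2,j_2)=1$ and $\An(i_1,j_2)=\An(i_2,j_1)=0$ preserves all row and column sums and modifies $\An$ by the rank-one matrix $E=(e_{i_1}-e_{i_2})(e_{j_2}-e_{j_1})^T$. The rank-one update formula gives $\rk(\An+E)=\rk \An + 1$ if and only if $e_{i_1}-e_{i_2}\notin\mathrm{col}(\An)$ and $e_{j_2}-e_{j_1}\notin\mathrm{row}(\An)$; equivalently, some left null vector $v$ satisfies $v(i_1)\neq v(i_2)$ and some right null vector $w$ satisfies $w(j_1)\neq w(j_2)$. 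Under the delocalization event almost all admissible switchings move $\An$ from $\{\rk\leq n-2\}$ into $\{\rk\geq n-1\}$, while a rank-dropping switching from a graph $G'$ with $\rk A_{G'}\geq n-1$ must use both a row-pair $(i_1,i_2)$ with $v(i_1)=v(i_2)$ and a column-pair $(j_1,j_2)$ with $w(j_1)=w(j_2)$ for the essentially unique null vectors of $A_{G'}$.

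Let $\Omega_2$ denote the event $\{\rk\An\leq n-2\}$ intersected with the delocalization event. The standard double counting on the switching involution gives $|\Omega_2|\cdot N_{\text{good,min}}\leq|\text{total}|\cdot N_{\text{rev,max}}$, where $N_{\text{good,min}}$ is a uniform lower bound on the rank-raising admissible switchings from any $G\in\Omega_2$, and $N_{\text{rev,max}}$ uniformly upper-bounds the rank-dropping admissible switchings from any graph with rank at least $n-1$. Using delocalization of the null vectors of $A_{G'}$ one obtains $N_{\text{rev,max}}\leq Mn\cdot d^2 = O(n^2 d^2\ln^2 d/\ln n)$, while delocalization on $\Omega_2$ together with the typical size $\sim d^2$ of admissible column-pair sets per good row-pair gives $N_{\text{good,min}}\geq c\,n^2 d^2$ (the $O(M/n)$ fraction of bad row- and column-pairs is a lower-order correction). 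Dividing yields $\Prob(\Omega_2)\leq C''\ln^2 d/\ln n$, and combining with the probability of the delocalization event completes the proof.

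The main obstacle is coordinating the rank-raising count on bad graphs with the rank-dropping count on good graphs so that both rely on a delocalization property valid on the \emph{same} underlying event; in particular, the delocalization event must be essentially stable under a single switching, which typically requires working with a slightly enlarged, switching-robust version of the delocalization event. A secondary technical point is the uniform application of delocalization over a two-dimensional kernel, handled by a standard $\eps$-net argument over the projective space of the kernel and absorbed into the constant.
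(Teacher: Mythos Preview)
Your overall strategy---combine the simple switching method with the delocalization theorem via a double-counting inequality---is exactly the one the paper uses, and your rank-one perturbation viewpoint (a switching adds $(e_{i_1}-e_{i_2})(e_{j_2}-e_{j_1})^T$, and the rank increases iff $e_{i_1}-e_{i_2}\notin\mathrm{col}(A)$ \emph{and} $e_{j_2}-e_{j_1}\notin\mathrm{row}(A)$, while a rank drop forces \emph{both} membership conditions) is correct and in fact slightly cleaner than the paper's formulation through the subspaces $F_{ij}=\spn\{(R_s)_{s\neq i,j},R_i+R_j\}$ and the sets $K_A$. The counts you obtain, $N_{\text{good,min}}\gtrsim n^2d^2$ and $N_{\text{rev,max}}\lesssim \beta_n n d^2$ with $\beta_n=Cn\ln^2 d/\ln n$, match the paper's Lemma~3.1.

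Where you diverge is in what you call the ``main obstacle.'' You propose to make the delocalization event switching-robust so that the forward and reverse counts live on a common event. The paper does something much simpler and you should too: it does \emph{not} require any stability of $\e_{\ref{th: deloc}}$ under switchings. In the double count one simply splits the target matrices $\bar A\in\e_{n-1}$ according to whether $\bar A\in\e_{\ref{th: deloc}}$ or not. If $\bar A\in\e_{\ref{th: deloc}}$ one uses your bound $|Q^{-1}(\bar A)|\le C\beta_n n d^2$; if $\bar A\notin\e_{\ref{th: deloc}}$ one uses the trivial bound $|Q^{-1}(\bar A)|\le n^2d^2$. Since $\Prob(\e_{\ref{th: deloc}}^c)\le 2/n$ (note: the delocalization event has probability $\ge 1-2/n$, not $1-C\ln^2 d/\ln n$ as you wrote), the second contribution is $O(1/n)$, which is negligible next to $\beta_n/n$. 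No enlarged or robust event is needed.

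Two smaller remarks. First, your ``secondary technical point'' about an $\eps$-net over the projective space of the kernel is unnecessary: Theorem~\ref{th: deloc} already asserts the level-set bound for \emph{every} nonzero vector in $\ker A\cup\ker A^T$, and in any case your counting only ever uses a single fixed null vector on each side. Second, the image of the rank-raising relation from $\{\rk\le n-2\}$ lands in $\{\rk\le n-1\}$, not just in $\{\rk=n-1\}$; your reverse bound must (and does) apply to any singular $\bar A$ in $\e_{\ref{th: deloc}}$, not only those of corank one. With the case-split above inserted, your argument goes through and is equivalent to the paper's.
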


This theorem is ``one step away'' from proving the conjectured invertibility for a (large) constant $d$.
We would like to emphasize that the main point of the theorem is that  even for a constant $d$ the
probability of a ``good" event tends to $1$ with $n$ (and not with $d$ as in \cite{LLTTY1, LLTTY2}).
To the best of our knowledge, it is the first result of such a kind dealing with singularity of
$d$-regular random matrices.
The proof of Theorem~\ref{th: main} uses  the standard technique of
simple switchings (in particular, it was also used in \cite{Cook} and \cite{LLTTY2}).
We recall the procedure using the matrix language.
Denote by $\Aset$ the set of all adjacency matrices of directed $d$-regular graphs on $n$ vertices, i.e.,
all $0/1$ matrices with $d$ ones in every row and every column.
Given $\A=(\a_{st})_{1\leq s,t\leq n} \in \Aset$ we
say that a switching in $(i, j, k, \ell)$  {\it can be performed}
if $\a _{ik}=\a _{j\ell}=1$ and $\a_{i\ell}=\a _{jk}=0$. Further, given such a matrix $\A\in \Aset$, we
say that a matrix $\bar A = (\bar \a _{st})_{s,t} \in \Aset$ is obtained
from $A$ by a simple switching (in  $(i, j, k, \ell)$) if
$\bar \a _{ik}=\bar \a _{j\ell}=0$,  $\bar \a _{i\ell}=\bar \a _{jk}=1$,
and $\bar \a _{st}=\a_{st}$ otherwise. Note that this operation
does not destroy the $d$-regularity of the underlying graph.
A well known application of the simple switching is due to McKay
\cite{Mckay} in the context of undirected $d$-regular graphs.
Starting from a matrix $\A\in \Aset$, one can reach any other matrix in $\Aset$
by iteratively applying simple switchings.
In this connection, a feasible strategy in estimating the cardinality of a subset $\mathcal{B}\subset \Aset$
is to pick an element in $\mathcal{B}$ and bound the number of switchings
which would result in another element of $\mathcal{B}$ versus switchings leading outside of $\mathcal{B}$.
In a sense, one  studies the stability of $\mathcal{B}$ under this operation.
We will make this standard approach more precise in the preliminaries.
Clearly, for a matrix $\A\in \Aset$ and any $1\leq i<j\leq n$,
$$
\rk \A= \dim\,\big( \spn\{(\row_s)_{s\neq i,j}, \row_i+\row_j, \row_i\}\big),
$$
where $\row_1, \row_2, ..., \row_n$ denote the rows of $\A$. Since $(\row_s)_{s\neq i,j}$
and $\row_i+\row_j$ are invariant under any switching involving the $i$-th and $j$-th rows,
then for any matrix $\bar \A$ obtained from $\A$ by such a switching, we have
\begin{equation}\label{eq: rank}
\vert \rk \A - \rk \bar\A \vert \leq 1.
\end{equation}
In a sense, we will show that given a matrix from $\Aset$ of corank at least $2$, most of simple switchings
tend to increase the rank.
We will use that the kernel of A, $\ker \A$, is contained in $\Fp$, where
$\F:= \spn\{(\row_\ell)_{\ell\neq i,j}, \row_i+\row_j\}$. Note that $\F$ is invariant under any simple
switching on the $i$-th and $j$-th rows.

In this paper, the  simple switching procedure is combined
with a recent delocalization result for eigenvectors of $\An$ established by the authors
in \cite[Corollary~1.2]{LLTTY4}.
Below we state a less general version of the delocalization result.

\begin{theor}[\cite{LLTTY4}]\label{th: deloc}
There exists a universal constant $C>0$ such that for any integer $d\geq C$ the following holds.
Let $n> d$ and let $\An$ be the adjacency matrix
of the directed random $d$-regular graph on $n$ vertices. Then with
probability at least $1-2/n$ any vector
$x\in \left(\ker \An\, \cup \, \ker \An^T\r) \setminus\{0\}$
satisfies
$$
 \forall \lambda\in \R \quad \quad \vert\{i\leq n:\, x_i=\lambda\}\vert \leq Cn\, \ln^2 d/\ln n.
$$
\end{theor}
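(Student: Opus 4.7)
The plan is to fix a subset $I\subset[n]$ of size $k\ge k_0:=Cn\ln^2 d/\ln n$ and a level $\lambda\in\R$, bound the probability $p_{I,\lambda}$ that $\An$ admits a non-zero kernel vector $x$ with $x_i=\lambda$ for all $i\in I$, and then union-bound over all such $I$; the continuum of levels is reduced to a finite net in the standard way via normalization. Writing $x=\lambda\mathbf{1}_I+y$ with $\supp(y)\subseteq I^c$ and using $\An\mathbf{1}=d\mathbf{1}$, the equation $\An x=0$ becomes a linear system on the $(n-k+1)$-dimensional subspace
$$
V_I:=\{\mu\mathbf{1}_I+v:\mu\in\R,\ \supp(v)\subseteq I^c\}.
$$
The event of interest thus becomes $\{\ker(\An)\cap V_I\neq\{0\}\}$, and the identical treatment applies to $\ker(\An^T)$ after transposition.

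For a fixed $I$, the strategy is a small-ball/$\eps$-net argument in the spirit of Rudelson--Vershynin, adapted to the regular model through switchings. Partition the unit sphere of $V_I$ into a low-entropy ``compressible'' part (vectors nearly aligned with a sparse direction or with $\mathbf{1}_I$) and an ``incompressible'' part. The compressible piece is absorbed by direct covering since its metric entropy is much smaller than $\binom{n}{k}^{-1}$. For the incompressible piece, estimate $\Prob(\|\An v\|\le\eta)$ via anti-concentration of the inner products $\langle R_s,v\rangle$, where $R_s$ denotes the $s$-th row of $\An$. A Littlewood--Offord-type bound for a single uniform $0/1$ row of weight $d$, combined with a sequential switching argument that decouples the conditional distributions of successive rows (as was done in \cite{Cook,LLTTY1,LLTTY2}), yields a product-type estimate roughly of the form $\Prob(\|\An v\|\le\eta)\lesssim (c/\sqrt d)^n$, up to polynomial factors. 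Combining with a net on the unit sphere of $V_I$ of cardinality $\exp(O(n-k))$ and the $\binom{n}{k}\le\exp(k\ln(en/k))$ factor from the union bound over $I$ produces a final probability bound $2/n$ provided $k\ge k_0$.

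The main obstacle is engineering a small-ball estimate strong enough, and uniform enough in $v\in V_I$, to survive the $\binom{n}{k}$ entropy cost. This requires (i) a Littlewood--Offord-type inequality for rows of $\An$ that exploits arithmetic structure of $v$ beyond mere incompressibility; and (ii) a switching analysis that quantifies the near-independence of rows and keeps the tensorization loss negligible on the scale $n\ln^2 d/\ln n$. A particularly delicate sub-case is that of vectors in $V_I$ lying close to $\R\mathbf{1}_I$, where spread is inherently lost because of the very definition of $V_I$; these must be handled in tandem with the algebraic reduction of the first paragraph, trading the loss of geometric structure against a direct combinatorial analysis of the event $\{\An x=0,\ x\in\R\mathbf{1}_I\oplus\text{small}\}$.
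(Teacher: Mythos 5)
The first thing to note is that this paper does not prove Theorem~\ref{th: deloc} at all: it is imported verbatim from \cite{LLTTY4}, so there is no internal proof to compare yours against. Judged on its own terms, your proposal is a programme rather than a proof, and its central step is a genuine gap. Everything rests on the claimed small-ball estimate $\Prob(\|\An v\|\le\eta)\lesssim (c/\sqrt d)^n$, uniform over the incompressible part of the unit sphere of $V_I$ and strong enough to beat both a net of cardinality $\exp(O(n))$ and the union bound over the $\binom{n}{k}$ choices of $I$. For constant $d$ no such estimate is known, and it is essentially a strengthening of the open singularity conjecture for constant-degree regular digraphs: $V_I$ has dimension $n-k+1=(1-o(1))n$, so you are asking for an almost-exponentially small bound on the probability that a nearly square restriction of $\An$ is non-injective. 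The state of the art for the full singularity probability obtained by exactly the tools you invoke (row-wise Littlewood--Offord bounds plus switching-based decoupling, as in \cite{Cook,LLTTY1,LLTTY2}) is $C\ln^3 d/\sqrt d$, a bound that does not decay in $n$ at all, whereas your union bound over $I$ with $k=Cn\ln^2 d/\ln n$ requires a per-$I$ failure probability of order $\exp(-cn\ln^2 d\,\ln\ln n/\ln n)$. The tensorized bound $(c/\sqrt d)^n$ also cannot be transferred from a net to arbitrary $v$ in this sparsity regime: each row inner product $\langle \row_s,v\rangle$ anti-concentrates only at a constant scale while $\|\An\|=d$, so the mesh required for the approximation step inflates the net beyond what $(c/\sqrt d)^n$ can absorb; and the rows are not independent, so the product form itself is unjustified for all $n$ rows when $d$ is bounded.

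The actual proof in \cite{LLTTY4} takes a different, essentially combinatorial route: rather than fixing the level set $I$ in advance and paying the $\binom{n}{k}$ entropy, it analyzes the level sets of the (unknown) null vector directly, exploiting expansion properties of the random $d$-regular digraph together with the simple switching method to show that a null vector with a level set of size exceeding $Cn\ln^2 d/\ln n$ forces edge configurations between level sets that are unlikely. No $\eps$-net or Littlewood--Offord machinery is used, which is precisely what makes the constant-$d$ regime accessible. The Rudelson--Vershynin scheme you outline is the approach that is known to break down at bounded degree, and your own closing paragraph correctly identifies, but does not close, the resulting gap.
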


In fact in \cite{LLTTY4} the assumption $d\leq \exp(c\sqrt{\ln n})$ was also involved,
however for  $d\geq \exp(c\sqrt{\ln n})$  the bound on the cardinality trivially holds.
A more general quantitative version of Theorem~\ref{th: deloc}, proved in \cite{LLTTY4},
served as the key element in establishing the circular law \cite{LLTTY5} for the limiting spectral distribution
when the degree $d=d(n)\leq \ln ^{96} n$ tends to infinity with $n$ (for the regime $d>\ln ^{96}n$ see
\cite{Cook2}).
 In this note, we take advantage of the fact that the results of \cite{LLTTY4} also work for any large constant $d$.

\section{Preliminaries}

For an $n\times n$ matrix $\A$, we denote by $(\row_s)_{s\leq n}$ and $(\col_s)_{s\leq n}$ its rows and columns respectively.
Given positive integer $m$, we denote by $[m]$ the set $\{1, 2, ..., m\}$.
Further, for a vector $x\in \R^n$, we denote its support by $\supp\, x=\{i\leq n:\, x_i\neq 0\}$.

Given two sets $\mathcal{B}$, $\mathcal{B}'$ and a relation
$Q\subset \mathcal{B}\times \mathcal{B}'$, we set $Q(\mathcal{B})=\bigcup_{b\in \mathcal{B}} Q(b)$
and  $Q^{-1}(\mathcal{B}')=\bigcup_{b'\in \mathcal{B}'} Q^{-1}(b')$, where
$$
Q(b)= \{b'\in \mathcal{B}':\, (b,b')\in Q\}\quad \text{ and }\quad   Q^{-1}(b')= \{b\in \mathcal{B}:\, (b,b')\in Q\},
$$
for any $b\in \mathcal{B}$ and any $b'\in \mathcal{B}'$.
In what follows, we consider the symmetric relation $Q_0$ on $\Aset\times \Aset$
defined by
\begin{equation}\label{Q definition}
\mbox{$(\A, \bar \A)\in  Q_0\,\, $ if and only if $\,\, \bar \A$ can be obtained from $\A$ by a simple
switching.}
\end{equation}

The following simple claim will be used to compare cardinalities of two sets given a relation on their Cartesian product.
We refer to \cite[Claim~2.1]{LLTTY2} for a proof of a similar claim.
\begin{claim}\label{multi-al-gen}
Let $Q$ be a finite relation on $\mathcal{B}\times\mathcal{B}'$ such that for every $b\in \mathcal{B}$
and every $b'\in \mathcal{B}'$ one has $\vert Q(b)\vert\geq s_b$ and $\vert Q^{-1}(b')\vert \leq t_{b'}$
for some numbers $s_b,t_{b'}\geq 0$. Then
$$
\sum_{b\in\mathcal B}s_b \leq \sum_{b'\in \mathcal B'}t_{b'}.
$$
\end{claim}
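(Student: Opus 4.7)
The plan is to prove Claim~\ref{multi-al-gen} by a standard double counting of the relation $Q$, viewed as a finite subset of $\mathcal{B}\times\mathcal{B}'$. The two-variable estimate is reduced to two one-variable partitions of $|Q|$: one obtained by summing over the first coordinate (fibers $Q(b)$), the other by summing over the second coordinate (fibers $Q^{-1}(b')$).

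More precisely, I would first observe that since $Q$ is finite, the sets $\{(b,b')\in Q:\,b=b_0\}$ for $b_0\in \mathcal{B}$ partition $Q$, and their cardinalities are exactly $|Q(b_0)|$. Hence
$$
|Q|\;=\;\sum_{b\in\mathcal{B}} |Q(b)|\;\geq\;\sum_{b\in\mathcal{B}} s_b,
$$
where the inequality uses the assumption $|Q(b)|\geq s_b$. Next, the analogous partition by the second coordinate, together with the assumption $|Q^{-1}(b')|\leq t_{b'}$, gives
$$
|Q|\;=\;\sum_{b'\in\mathcal{B}'} |Q^{-1}(b')|\;\leq\;\sum_{b'\in\mathcal{B}'} t_{b'}.
$$
Chaining these two bounds yields the desired inequality $\sum_{b\in\mathcal{B}} s_b \leq \sum_{b'\in\mathcal{B}'} t_{b'}$.

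There is essentially no obstacle here; the only minor point worth being careful about is that the assumption on $|Q(b)|$ is made for \emph{every} $b\in\mathcal{B}$ (and symmetrically for $b'$), so that the sums above are indexed by all of $\mathcal{B}$ (resp.\ $\mathcal{B}'$) rather than only by the projection of $Q$ onto each factor. For $b\notin\pi_{\mathcal{B}}(Q)$ we have $|Q(b)|=0$, so the hypothesis forces $s_b=0$, and such terms contribute nothing to the left-hand side; a symmetric remark applies to $t_{b'}$ on the right-hand side. With this bookkeeping in place, the two displayed equalities are literal finite-set decompositions, and the claim follows in a couple of lines.
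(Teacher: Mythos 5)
Your double-counting argument is correct and is exactly the standard proof of this claim; the paper itself does not spell out a proof but defers to \cite[Claim~2.1]{LLTTY2}, where the same fiberwise decomposition of $|Q|$ over each coordinate is used. Your remark about elements outside the projections forcing $s_b=0$ (resp.\ bounding nothing for $t_{b'}$) is a sensible bit of bookkeeping and does not change the argument.
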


Next, given $\A\in \Aset$, we estimate the number of
possible switchings on $\A$, that is, the cardinality of the set
$$
{\cal{F}}_\A= \{(i, j, k, \ell) \, :\,   \mbox{a switching in }\,
(i, j, k, \ell) \, \mbox{ can be performed} \}.
$$
Recall that we say that a simple switching {\it can be performed in}
$(i, j, k, \ell)$ if $\a _{ik}=\a _{j\ell}=1$ and $\a_{i\ell}=\a _{jk}=0$.
Note that this automatically implies that $i\neq j$ and $k\neq \ell$.
Note also that two formally distinct simple switchings $(i,j,k,\ell)$
and $(j,i,\ell,k)$ result in the same transformation of a matrix.

\begin{lemma}\label{l: nb-switch} Let $1\leq d\leq n$ and $\A\in \Aset$. Then
$$
n(n-d)d^2 - nd(d-1)^2 \leq | {\cal{F}}_\A| \leq  n(n-d)d^2.
$$
\end{lemma}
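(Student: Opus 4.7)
The plan is to evaluate $|\mathcal{F}_\A|$ by an explicit counting argument, parametrizing quadruples $(i,j,k,\ell)$ according to the on/off pattern of entries of $\A$. I would fix first the pair $(i,k)$ with $\a_{ik}=1$ (there are exactly $nd$ such pairs by row-regularity), then the index $\ell$ with $\a_{i\ell}=0$ (giving $n-d$ options, and note $\ell\neq k$ is automatic), and finally the index $j$ with $\a_{j\ell}=1$ and $\a_{jk}=0$. Column-regularity implies that there are exactly $d$ rows $j$ with $\a_{j\ell}=1$, of which at most $d$ will also satisfy $\a_{jk}=0$. Multiplying these bounds together yields $|\mathcal{F}_\A|\leq nd\cdot (n-d)\cdot d = n(n-d)d^2$, which gives the upper bound.

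For the lower bound I would upgrade the above to an exact identity. Setting $m(k,\ell):=|\{j:\a_{jk}=\a_{j\ell}=1\}|$ for $k\neq \ell$, the number of admissible $j$'s above is exactly $d-m(k,\ell)$, so
$$
|\mathcal{F}_\A|=\sum_{(i,k):\,\a_{ik}=1}\sum_{\ell:\,\a_{i\ell}=0}\big(d-m(k,\ell)\big)=n(n-d)d^2-\Delta,
$$
where $\Delta$ counts quadruples $(i,j,k,\ell)$ with $\a_{ik}=1$, $\a_{i\ell}=0$, and $\a_{jk}=\a_{j\ell}=1$. To show $\Delta\leq nd(d-1)^2$, I would swap the order of summation and fix $(j,k,\ell)$ first. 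The number of triples with $\a_{jk}=\a_{j\ell}=1$ and $k\neq \ell$ is $nd(d-1)$ by row-regularity (for each $j$, one picks an ordered pair of distinct columns in $\supp\row_j$). For each such triple, I need to bound the number of $i$'s with $\a_{ik}=1$ and $\a_{i\ell}=0$; column $k$ contains exactly $d$ ones and one of them lies in row $j$, which fails $\a_{j\ell}=0$. Hence at most $d-1$ values of $i$ are admissible, and $\Delta\leq nd(d-1)\cdot (d-1)$ follows.

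No serious obstacle is expected: both inequalities reduce to careful bookkeeping using row- and column-regularity. The only mildly delicate point is obtaining the factor $(d-1)^2$ rather than a weaker $d^2$ or $d(d-1)$ in the correction term, which relies on observing that the row $j$ is itself one of the $d$ ones in column $k$ and is automatically disqualified from serving as $i$.
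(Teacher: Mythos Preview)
Your argument is correct. Both your proof and the paper's are direct counting arguments based on row- and column-regularity, so in spirit they are the same; the difference lies only in how the count is sliced. The paper fixes $(i,k)$ with $\a_{ik}=1$ and then bounds the number of admissible pairs $(j,\ell)$ jointly, by computing the number of ones lying in the forbidden region $T=([n]\times\supp\row_i)\cup(\supp\col_k\times[n])$ via inclusion--exclusion; this yields directly $(n-d)d-(d-1)^2\le q\le (n-d)d$ for each $(i,k)$. You instead fix $(i,k,\ell)$ first, obtain the exact identity $|\mathcal{F}_\A|=n(n-d)d^2-\Delta$, and then bound the correction $\Delta$ by re-parametrizing over $(j,k,\ell)$ before $i$. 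Your route is slightly longer but has the small advantage of isolating an explicit formula before estimating; the paper's is more compact because it handles $(j,\ell)$ in one step. Either way the $(d-1)^2$ factor arises from the same observation: once the relevant row and column are fixed, one index in the count is automatically disqualified.
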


\begin{proof}
To find a possible switching, we first fix an entry $a_{ik}$ equal to $1$.
By $d$-regularity of $A$,  there are exactly $nd$ choices of the pair $(i,k)$.
To be able to perform a simple switching in $(i, j, k, \ell)$, the pair of indices $(j,\ell)$ must satisfy
$$
 a_{j\ell}=1 \quad \quad \mbox{ and } \quad \quad
 (j,\ell)\notin T:=\Big([n]\times\supp\, \row_i\Big)\bigcup \Big(\supp\, \col_k\times [n]\Big).
$$
By $d$-regularity we observe that the number
$p$ of pairs  $(s,t)\in T$ with $a_{st}=1$
satisfies $d^2\leq p\leq d^2+(d-1)^2$. Since there are $nd$ choices for $(j,\ell)$ with $a_{j\ell}=1$,
we observe that the number $q$   of pairs  $(s,t)\notin T$ with $a_{st}=1$ satisfies
$$(n-d)d -(d-1)^2 \leq q \leq (n-d)d. $$
Since $| {\cal{F}}_\A| = nd q$, we obtain the desired result.
\end{proof}

\begin{rem}
Note that for $d=1$, i.e., in the case of a permutation matrix,
the upper and lower bounds in the above lemma coincide and both equal $n(n-1)$.
More generally, assume $n=md$ for an integer $m$ and consider the block-diagonal matrix
$\A$ with $m$ $d\times d$ blocks, each block consisting of ones. Then a switching
in $(i, j, k, \ell)$ can be performed if and only if $i$, $j$ correspond to different
blocks (there are $m(m-1)d^2=n(n-d)$ such pairs) and $k\in \supp R_i$, $\ell \in \supp R_j$
(there are $d^2$ such choices). Thus for such  a matrix $\A$ we have
$$
  | {\cal{F}}_\A| = n(n-d)d^2,
$$
which corresponds to the upper bound in Lemma~\ref{l: nb-switch}.
\end{rem}

Denote by $\e_{\ref{th: deloc}}$ the event in Theorem~\ref{th: deloc}.
As usual, we don't distinguish between events for the uniformly distributed random matrix on $\Aset$
and corresponding subsets of $\Aset$. In particular, denoting $\beta_n := Cn\, \ln^2 d/\ln n$,
$$
\e_{\ref{th: deloc}}:=\{\A\in \Aset\,:\, \forall x\in\left(\ker \A \, \cup \, \ker \A^T\r) \setminus\{0\}
\, \, \,
\forall\lambda\in\R \quad
|\{i\le n :\,x_i=\lambda\}|\leq  \theorparam\bigr\},
$$
where $\theorparam:=\min(n,  Cn\, \ln^2 d/\ln n)$ and $C$ is the constant from
Theorem~\ref{th: deloc}. Further, for every  $r\leq n$ set
$$
   \e_r=\{\A\in\Aset\,:\, \rk \A\leq r\} \quad \quad \mbox{ and } \quad \quad
   E_r=\{\A\in\Aset:\,\rk \A= r\}.
$$

Given $\A\in \Aset$ and  $i\neq j$, we set
$$
 \F=\F(\A):=\spn\{(\row_s)_{s\neq i,j}, \row_i+\row_j\}.
$$
Clearly, $\ker(\A)\subset\Fp$.
We will be interested in those pairs $(i,j)$ for which this inclusion turns to equality. Given $\A\in \Aset$, define
$$
 K_\A= \big\{(i,j)\in [n]^2 : \, i\neq j \text{ and } \ker \A = \Fp(\A)\big\}.
$$

\begin{lemma}\label{ichoice}
Let $d<n$ and  $\A\in \en \cap \e_{\ref{th: deloc}}$.
Assume $\beta_n := Cn\, \ln^2 d/\ln n \leq n$.  Then $|K_\A |\geq (n - \theorparam)^2$.
\end{lemma}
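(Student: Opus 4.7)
The plan is to reduce the combinatorial lower bound on $|K_\A|$ to a level-set counting estimate for a single vector in $\ker \A^T$, via the delocalization hypothesis $\e_{\ref{th: deloc}}$. First I would reformulate the membership condition for $K_\A$: since $\ker \A\subseteq \Fp$ always holds, equality is equivalent to $\dim\ker \A=\dim\Fp$, i.e., to $\rk \A = \dim\F$. Because the full row space of $\A$ equals $\F + \spn\{\row_i\}$ (note $\row_j=(\row_i+\row_j)-\row_i$), this dimensional equality collapses to the single algebraic requirement $\row_i\in \F$. Thus $(i,j)\in K_\A$ iff $i\neq j$ and $\row_i\in \F$.

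Next I would translate $\row_i\in \F$ into a condition on the left kernel. A decomposition $\row_i = \sum_{s\neq i,j} c_s \row_s + c(\row_i+\row_j)$ directly produces $y\in\ker \A^T$ with $y_i=1-c$, $y_j=-c$ and $y_s=-c_s$ otherwise, so that $y_i-y_j=1\neq 0$. Conversely, given $y\in\ker \A^T$ with $y_i\neq y_j$, the choice $c=-y_j/(y_i-y_j)$ makes $(1-c,-c)$ proportional to $(y_i,y_j)$, so $(1-c)\row_i - c\row_j$ is a scalar multiple of $\sum_s y_s \row_s - \sum_{s\neq i,j} y_s \row_s$ and hence lies in $\spn\{(\row_s)_{s\neq i,j}\}$, placing $\row_i$ in $\F$. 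This gives the equivalence $(i,j)\in K_\A$ iff $i\neq j$ and some $y\in\ker \A^T$ satisfies $y_i\neq y_j$. I expect this algebraic equivalence to be the main conceptual obstacle, although each direction is a short rearrangement.

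Finally, since $\A\in\en$ the left kernel $\ker \A^T$ is nontrivial; pick any nonzero $y$ in it. The event $\e_{\ref{th: deloc}}$ ensures that the level-set sizes $n_\lambda:=|\{i\le n:\ y_i=\lambda\}|$ satisfy $n_\lambda\le\theorparam$ for every $\lambda\in\R$, while $\sum_\lambda n_\lambda=n$. The number of ordered pairs $(i,j)$ with $y_i\neq y_j$ (which automatically forces $i\neq j$) equals
\[
n^2 - \sum_\lambda n_\lambda^2 \;\ge\; n^2 - \theorparam\sum_\lambda n_\lambda \;=\; n(n-\theorparam) \;\ge\; (n-\theorparam)^2,
\]
where the last inequality uses $\theorparam\le n$. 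By the characterization of the previous paragraph, every such pair belongs to $K_\A$, so $|K_\A|\ge (n-\theorparam)^2$, as desired.
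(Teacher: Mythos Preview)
Your proof is correct, and it takes a genuinely different and cleaner route than the paper's.

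The paper proceeds by fixing a nonzero $y\in\ker \A^T$, letting $I=\{i:y_i\ne 0\}$, and then splitting into two cases according to whether $\rk \A\le n-2$ or $\rk \A=n-1$. In the first case, for each $i\in I$ a \emph{second} left-null vector $z$ with $z_i=0$ is produced (using $\rk \A^i\le n-2$), and one takes $J(i)=\{j:z_j\ne 0\}$; in the second case one writes $\row_i=\sum_{s\ne i}x_s\row_s$, sets $x_i=-1$, and takes $L(i)=\{j:x_j\ne -1\}$. In both cases $|I|\ge n-\theorparam$ and each fibre has size $\ge n-\theorparam$, giving the product bound $(n-\theorparam)^2$.

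You instead extract the unified algebraic criterion $(i,j)\in K_\A \iff i\ne j$ and $\exists\,y\in\ker \A^T$ with $y_i\ne y_j$, and then apply it with a \emph{single} nonzero $y\in\ker \A^T$, counting ordered pairs with $y_i\ne y_j$ via $n^2-\sum_\lambda n_\lambda^2\ge n(n-\theorparam)$. This avoids the case split, avoids the auxiliary vectors $z$ and $x$ that vary with $i$, and in fact yields the slightly stronger bound $|K_\A|\ge n(n-\theorparam)$ before you discard it. The paper's approach, on the other hand, makes the role of the corank more explicit (in the $\rk\A\le n-2$ case one sees directly that $\row_i+\row_j\in\spn\{\row_s:s\ne i,j\}$), which may be conceptually helpful elsewhere but is not needed for the lemma as stated.
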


\begin{proof}
Since $\A$ is singular there exists $y\in\R^n\setminus\{0\}$ such that
$$
  \sum _{s=1}^n y_s \row_s = 0.
$$
Since $y\in \ker A^T$ and $\A\in \e_{\ref{th: deloc}}$, the set $I:=\{i :\, y_i\ne 0\}$ is of
cardinality at least $n-\theorparam$. Note that if $i\in I$ then $\row_{i}\in \spn\{\row_s,\, s\neq i\}$,
therefore removing the $i$-th row keeps the rank unchanged, that is,
we have $\rk \A=\rk \A^i$, where $\A^{i}$ denotes the $n\times n$ matrix obtained by
substituting the $i$-th row of $\A$ with the zero row.

Fix $i\in I$. If $\A\in \enn$ then $\rk \A^i=\rk \A\leq n-2$.
Thus, the non-zero rows of $\A^i$ are linearly dependent, therefore there exists
$z\in \R^n\setminus\{0\}$ such that
$$
z_i=0\quad \text{ and }\quad  \sum _{s\ne i} z_s \row_s = 0.
$$
Clearly, $z\in \ker \A^T$ and by the condition $\A\in  \e_{\ref{th: deloc}}$, the set
$$
 J=J(i):=\{j\leq n :\, z_j\ne 0\}
$$
 is of
cardinality at least $n-\theorparam$.
Note that if $j\in J$, then $\row_j\in \spn\{\row_s,\, s\neq i,j\}$
and thus, $\row_i+\row_j\in \spn\{\row_s,\, s\neq i,j\}$.
This means that $(i,j)\in K_\A$. Thus for $\A\in \enn$ one has
$$
 \vert K_\A\vert \geq |I| \, \min_{i\in I} |J(i)| \geq (n-\theorparam)^2 .
$$

Now suppose that $\A\in E_{n-1}$ and fix  $i\in I$.
Since $\row_{i}\in \spn\{\row_s,\, s\neq i\}$,  there exist scalars $(x_s)_{s\neq i}$ such that
$$
\row_i= \sum_{s\neq i} x_s \row_s.
$$
Therefore setting $x_i=-1$, we have $x=(x_s)_{s\leq n}\in \ker \A^T$
and since $\A\in  \e_{\ref{th: deloc}}$,
the set $L=L(i):=\{j\leq n:\, x_j\neq -1\}$ is of cardinality at least $n-\theorparam$.
Note that if $j\in L$, then
$$
 \row_i+\row_j= (x_j+1)\row_j+ \sum_{s\neq i,j} x_s \row_s\not\in \spn\{\row_s,\, s\neq i,j\}
$$
(otherwise, we would have $\row_j\in \spn\{\row_s,\, s\neq i,j\}$,
which is impossible since $\rk \A =n-1$). Using again that $\rk \A =n-1$, we obtain that
$\dim \F=n-1$, that is,
$$
 \dim \Fp= 1=\dim \ker \A.
$$
Therefore the inclusion $\ker \A\subset \Fp$ implies that
$(i,j) \in K_\A$ and the lower bound on the cardinality of $K_\A$ follows.
\end{proof}

Note that for every $A\in \Aset$ the subspace $\Fp(\A)$
is invariant under simple switchings involving the $i$-th and $j$-th rows.
Moreover, for every pair $(i,j)\in K_\A$ one has $\ker \A=\Fp(\A)$.
Therefore, since our aim is to show that most switchings tend to increase the rank,
we need to eliminate those which keep this equality valid, that is,
those which  keep $\ker \A$ unchanged. This motivates the following definition.

\begin{definition}\label{propertyP}
Let $d<n$, $\A\in \Aset$, and $(i, j, k, \ell)\in {\cal{F}}_\A$.
Let $x\in \R^n$. We  say that a  switching in  $(i, j, k, \ell)$ is $x$-bad
if $x_k=x_{\ell}$.
In other words, a  switching in  $(i, j, k, \ell)$ is $x$-bad if $\A x=\bar \A x$
(where by $\bar \A$ we denote the new matrix obtained from $\A$ by the switching).
\end{definition}

In the next lemma we estimate the number of $x$-bad switchings.

\begin{lemma}\label{j-2choice}
Let $d<n$, $\beta_n = Cn\, \ln^2 d/\ln n$, $\A\in \en \cap \e_{\ref{th: deloc}}$ and $x\in \ker \A\setminus \{0\}$.
Then
$$\big|\big\{(i, j, k, \ell)\in {\cal{F}}_\A:\;\mbox{switching in $(i, j, k, \ell)$ is $x$-bad}\big\}\big|\leq n \theorparam d^2.$$
\end{lemma}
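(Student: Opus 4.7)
\medskip

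The plan is to bound the count by decoupling the choice of the column indices $(k,\ell)$ from the row indices $(i,j)$ and using the delocalization hypothesis only on the former. Observe that any $x$-bad tuple $(i,j,k,\ell)\in\mathcal{F}_\A$ must satisfy $a_{ik}=a_{j\ell}=1$ and $x_k=x_\ell$. I will discard the constraints $a_{i\ell}=a_{jk}=0$, since these are only needed for $\mathcal{F}_\A$-membership and not for an upper bound, and count the relaxed set.

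First I would count ordered pairs $(k,\ell)\in[n]^2$ with $x_k=x_\ell$. For each fixed $k\in[n]$, since $x\in \ker\A\setminus\{0\}$ and $\A\in \e_{\ref{th: deloc}}$, applying the level set bound with $\lambda=x_k$ gives $|\{\ell\leq n:\,x_\ell=x_k\}|\leq \theorparam$. Summing over the $n$ choices of $k$, the number of such ordered pairs is at most $n\theorparam$.

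Second, for each fixed pair $(k,\ell)$ I would count pairs $(i,j)$ with $a_{ik}=a_{j\ell}=1$. By $d$-regularity of the columns of $\A$, there are exactly $d$ choices of $i$ with $a_{ik}=1$ and exactly $d$ choices of $j$ with $a_{j\ell}=1$, giving at most $d^2$ pairs. Multiplying the two bounds yields the desired estimate $n\theorparam d^2$.

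There is no real obstacle here: the entire content of the lemma is a repackaging of the delocalization property Theorem~\ref{th: deloc}, combined with the trivial $d$-regularity count for the row indices. The slightly subtle point is just to recognize that the $x$-bad condition $x_k=x_\ell$ depends only on the columns involved, so the argument splits cleanly into a column level-set count and a row degree count.
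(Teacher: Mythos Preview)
Your proof is correct and is essentially the same argument as the paper's: both count pairs $(k,\ell)$ with $x_k=x_\ell$ via the delocalization bound (the paper writes this as $\sum_p|L_p|^2\le \beta_n\sum_p|L_p|=n\beta_n$, which is exactly your per-$k$ count summed over $k$) and then multiply by the $d^2$ choices of $(i,j)$ coming from column $d$-regularity. The only cosmetic difference is that the paper indexes explicitly by the distinct values $\lambda_p$ of the coordinates of $x$, whereas you fix $k$ and apply the level-set bound with $\lambda=x_k$.
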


\begin{proof}
Let $\{\lam _p\, : \, p\leq m\}$ be the set of disctinct values taken by coordinates of $x$.
For every $p\leq m$ set
$$
  L_p = \{s\leq n \, :\, x_s = \lam _p\}.
$$
Since $\A\in \e_{\ref{th: deloc}}$, we have $|L_p|\leq \theorparam$
for all $p\leq m$. Since for an $x$-bad switching in $(i, j, k, \ell)$ we have
$x_k=x_{\ell}$,  $k$ and $\ell$ should belong to the same $L_p$. By $d$-regularity,
for every $p\leq m$ the number of switchings in $(i,j,k,\ell)$ with $k,\ell\in L_p$
is at most $d^2\vert L_p\vert^2$ (since we must have $a_{ik}=a_{j\ell}=1$).
Thus, the number of $x$-bad switchings is bounded above by
$$
  \sum _{p=1}^m \, d^2\vert L_p\vert^2 \leq   d^2\, \max_{p\leq m }\vert L_p\vert\,
  \sum _{p=1}^m \, \vert L_p\vert
  \leq  n\theorparam d^2.
$$
\end{proof}

\section{Proof of Theorem~\ref{th: main}}
\label{s: proof-main}

We start with the following lemma estimating the number of simple
switchings which increase the rank.

\begin{lemma}\label{n-2images}
Let $n\geq 2 d$ be large enough integers and $\A\in \en \cap \e_{\ref{th: deloc}}$.
Assume $\beta_n := Cn\, \ln^2 d/\ln n\leq n/4$.
Then there are at least $n(n-3\theorparam)d^2$
switchings in $(i,j,k,\ell)$ which increase the rank, i.e., for which
$$\rk \bar \A =\rk \A +1,$$
where $\bar \A$ denotes the matrix obtained by the switching.
\end{lemma}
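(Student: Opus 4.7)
The plan is to identify a large family of switchings that each strictly shrink $\ker \A$ and hence, by \eqref{eq: rank}, raise the rank by exactly one. Fix once and for all a nonzero $x_0 \in \ker \A$ (which exists since $\A \in \en$), and focus on those $(i,j,k,\ell) \in {\cal F}_\A$ satisfying both $(i,j) \in K_\A$ and $(x_0)_k \neq (x_0)_\ell$, i.e.\ the switching is not $x_0$-bad.

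The first step is to verify that every such switching does raise the rank. Since every generator of $\F = \spn\{(\row_s)_{s\neq i,j}, \row_i+\row_j\}$ is preserved by a switching on rows $i$ and $j$, the orthogonal complement $\Fp$ is also invariant: $\Fp(\bar \A) = \Fp(\A)$. Combined with $(i,j) \in K_\A$, this yields
\[
\ker \bar \A \subseteq \Fp(\bar \A) = \Fp(\A) = \ker \A.
\]
A direct check on the four altered entries shows $(\bar \A - \A) x_0 = ((x_0)_\ell - (x_0)_k)(e_i - e_j)$, which is nonzero by the non-$x_0$-bad assumption. Therefore $x_0 \notin \ker \bar \A$, i.e.\ $\ker \bar \A \subsetneq \ker \A$; together with \eqref{eq: rank} this forces $\rk \bar \A = \rk \A + 1$.

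The second step is a counting argument. For each ordered pair $(i,j)$ with $i \neq j$, the number of $(k,\ell)$ with $(i,j,k,\ell) \in {\cal F}_\A$ equals $(d - c_{ij})^2$, where $c_{ij} := |\supp \row_i \cap \supp \row_j|$. Using $(d - c_{ij})^2 \geq d^2 - 2dc_{ij}$, the lower bound $|K_\A| \geq (n-\theorparam)^2$ from Lemma~\ref{ichoice}, and the combinatorial identity $\sum_{i \neq j} c_{ij} = nd(d-1)$, one gets
\[
\sum_{(i,j) \in K_\A}(d-c_{ij})^2 \geq |K_\A|\, d^2 - 2d \sum_{i \neq j} c_{ij} \geq (n-\theorparam)^2 d^2 - 2nd^2(d-1).
\]
Subtracting the upper bound $n\theorparam d^2$ on the total number of $x_0$-bad switchings furnished by Lemma~\ref{j-2choice} and expanding $(n - \theorparam)^2$, I expect to arrive at
\[
\#\{(i,j,k,\ell)\,:\,\rk \bar \A = \rk \A + 1\} \geq n(n-3\theorparam) d^2 + \theorparam^2 d^2 - 2nd^2(d-1).
\]

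The main (and essentially only) hurdle is to absorb the correction $2nd^2(d-1)$ into $\theorparam^2 d^2$, which amounts to $\theorparam^2 \gtrsim nd$. The hypothesis $\theorparam \leq n/4$ is equivalent to $\ln n \geq 4C \ln^2 d$, forcing $n$ to be super-polynomial in $d$; for $d \geq C$ and $n$ sufficiently large, the inequality $\theorparam^2 = C^2 n^2 \ln^4 d / \ln^2 n \geq 2n(d-1)$ is straightforward to verify, which closes the estimate. Note that the argument works uniformly over the two subcases $\A \in E_{n-1}$ and $\A \in \enn$, as it only invokes the defining property $\ker \A = \Fp(\A)$ for $(i,j) \in K_\A$.
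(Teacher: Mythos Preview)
Your proof is correct and follows essentially the same approach as the paper: identify the rank-increasing switchings as those with $(i,j)\in K_\A$ that are not $x_0$-bad, verify via the invariance of $\Fp$ that each such switching strictly shrinks the kernel, and then count. The only cosmetic difference is in the counting step: the paper lower-bounds the number of switchings with $(i,j)\in K_\A$ by taking the global lower bound $|{\cal F}_\A|\geq n(n-2d)d^2$ from Lemma~\ref{l: nb-switch} and subtracting $|K_\A^c|\,d^2$, whereas you sum $(d-c_{ij})^2$ directly over $K_\A$ and use $\sum_{i\neq j}c_{ij}=nd(d-1)$; both routes yield the same residual inequality $\theorparam^2\gtrsim nd$, which is handled identically.
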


\begin{proof}
Given  two rows $R_i$ and $R_j$, $i\neq j$, of $\A$, by $d$-regularity, there are at most
$d^2$ $4$-tuples $(i,j,k,\ell)$ in which a switching can be performed. Thus,
the number of switchings in $(i,j,k,\ell)$ with $(i,j)\in [n]\times [n]\setminus  K_\A$ is at most
$\vert K_\A^c\vert d^2$, where the complement is taken in $[n]^2$.
Therefore, applying Lemmas~\ref{l: nb-switch} and \ref{ichoice} we obtain that
the number $N$ of possible switchings in $(i,j,k,\ell)$
 with $(i,j)\in K_\A$ is at least
\begin{equation}\label{numbsw}
 N\geq | {\cal{F}}_\A| - \vert K_\A^c\vert d^2\geq
 n(n-2d)d^2 - \big(n^2 -(n-\theorparam)^2\big)d^2\geq
 d^2\big(n^2 - 2dn - 2\beta _n n +\beta_n^2\big).
\end{equation}

For the rest of the proof, we fix a non-zero vector $x\in \ker \A$.

Fix for a moment $(i,j)\in K_\A$,
and note that for any switching on the $i$-th and $j$-th rows, we have
$$
 \ker \bar \A \subset \Fp(\bar A) =\Fp(A) =\ker \A .
$$
Observe that if a switching on $i,j$-th rows is not $x$-bad,
then
$\bar \A x\neq \A x=0$ and therefore
$$
 \ker \bar \A \ne \ker\A = \Fp(\bar A) .
$$
This means that $\rk \bar\A > \rk \A $ and by \eqref{eq: rank} implies that $\rk \bar \A =\rk \A +1$.
Thus, any possible switching in $(i,j,k,\ell)$, which is not $x$-bad and such that $(i,j)\in K_\A$
increases the rank of the matrix by one.

Applying Lemma~\ref{j-2choice} and inequality \eqref{numbsw}, we obtain that
the number $N_0$ of switchings described above is at least
\begin{align*}
N_0 \geq N-  n \theorparam d^2\geq d^2\big(n^2 - 2dn - 3\beta _n n +\beta_n^2\big).
\end{align*}
Since $\theorparam^2\geq 2nd$ for
large enough $n$, this completes the proof.
\end{proof}

\begin{proof}[Proof of Theorem~\ref{th: main}]
We may assume that $d\leq \exp(c\sqrt{\ln n})$ for a small enough absolute constant $c>0$
(otherwise the probability bound in Theorem~\ref{th: main} trivially holds). In this
case $n\geq 4\theorparam$.
Fix $r\in\{1,\ldots, n-2\}$ and consider the relation
$$
 Q_r\subseteq (E_r\cap \e_{\ref{th: deloc}})\times E_{r+1},
$$
defined by $(\A, \bar \A)\in Q_r$ if and only if $\A\in E_r\cap \e_{\ref{th: deloc}}$, $\bar \A \in E_{r+1}$,
and $(\A, \bar \A)\in Q_0$, where the symmetric relation $Q_0$ is given by \eqref{Q definition}.

Using that any two switchings $(i,j,k,\ell)$ and $(j,i,\ell,k)$
produce the same transformed matrix, and applying Lemma~\ref{n-2images} we observe that for every
$\A\in E_r\cap \e_{\ref{th: deloc}}$,
$$
   |Q_r(\A)|\geq n(n-3\theorparam)d^2/2.
$$
Now let $\bar \A\in Q_r(E_r\cap \e_{\ref{th: deloc}})$. If $\bar \A\in \e_{\ref{th: deloc}}$, then by
Lemmas~\ref{n-2images} and \ref{l: nb-switch},
$$
|Q_r^{-1}(\bar\A)|\leq  \big(| {\cal{F}}_{\bar\A}|- n(n-3\theorparam)d^2\big)/2\leq 3n\theorparam d^2/2.
$$
%where the last inequality follows from Lemma~\ref{l: nb-switch}.
Otherwise, if $\bar \A\in \e_{\ref{th: deloc}}^c$ then
$$
|Q_r^{-1}(\bar\A)|\leq | {\cal{F}}_{\bar\A}|/2\leq n(n-d)d^2/2.
$$
Then Claim~\ref{multi-al-gen} implies
$$
\frac{n(n-3\theorparam)d^2}{2}\, \vert E_r\cap \e_{\ref{th: deloc}}\vert\leq \frac{3n\theorparam d^2}{2}\,
\vert E_{r+1}\cap \e_{\ref{th: deloc}}\vert +
\frac{n(n-d)d^2}{2}\,  \vert E_{r+1}\cap \e_{\ref{th: deloc}}^c\vert.
$$
Summing over all $r=1,\ldots, n-2$ gives
$$
\vert \enn\cap \e_{\ref{th: deloc}}\vert\leq
\frac{3\theorparam}{n-3\theorparam}\,
\vert \en\cap \e_{\ref{th: deloc}}\vert +
\frac{n}{n-3\theorparam}\, \vert \en\cap \e_{\ref{th: deloc}}^c\vert
=
\frac{3\theorparam}{n-3\theorparam}\,
\vert \en \vert +
 \vert \e_{\ref{th: deloc}}^c\vert .
$$
Using that $n\geq 4\theorparam$ and $\theorparam=Cn \ln^2d/\ln n$, we obtain
$$
  |\enn\vert \leq \vert \enn\cap \e_{\ref{th: deloc}}\vert + \vert \e_{\ref{th: deloc}}^c\vert
 \leq \frac{12\theorparam}{n} \, \vert \en\vert +  2\vert \e_{\ref{th: deloc}}^c\vert
 \leq \frac{12 C  \ln^2d}{\ln n} \, \vert \en\vert +  2\vert \e_{\ref{th: deloc}}^c\vert .
$$
Theorem~\ref{th: deloc} implies the desired  result.
\end{proof}

\begin{rem}
For  $\A\in E_{n-1}\cap \e_{\ref{th: deloc}}$, Lemma~\ref{n-2images} guarantees existence of
many simple switchings which produce full rank matrices from $\A$.
With the above notations, we have
$$
 \vert Q_{n-1}(\A)\vert\geq n(n-3\theorparam)d^2/2.
$$
In order to prove along the same lines that a ``typical'' matrix in $\Aset$ is non-singular,
one needs to consider the reverse operation as well, i.e., to show that for any
full rank matrix, there are very few switchings which transform it to a singular one.
The argument of this note is based on finding switchings using structural information about vectors in the kernel,
specifically, delocalization properties in Theorem~\ref{th: deloc}.
When the matrix is of full rank, we do not have any non-trivial null vectors at hand,
which does not allow to revert the above procedure and verify invertibility.
\end{rem}

\subsection*{Acknowledgments}
P.Y. was supported by grant ANR-16-CE40-0024-01.
A significant part of this work was completed while the last three named authors were in residence at the
Mathematical Sciences Research Institute in Berkeley, California,
supported by NSF grant DMS-1440140, and the first two named authors visited the institute. The hospitality of MSRI and of the
organizers of the program on Geometric Functional Analysis and
Applications is gratefully acknowledged.

\address

\end{document}